\newtheorem{theorem}{Theorem}[section]
\newtheorem{corollary}[theorem]{Corollary}
\theoremstyle{definition}
\begin{document}

\title
{Triangle-free Uniquely 3-Edge Colorable Cubic Graphs}

\author{sarah-marie belcastro}
\address{Department of Mathematics and Statistics, Smith College, Northampton, MA 01063 USA}
\email{smbelcas@toroidalsnark.net}
\author{Ruth Haas}
\address{Department of Mathematics and Statistics, Smith College, Northampton, MA 01063 USA}
\email{rhaas@smith.edu}

\subjclass[2010]{05C10, 05C15}
\keywords{edge coloring, unique coloring, triangle-free, cubic graphs}

\thanks{Ruth Haas's work partially supported by NSF grant DMS-1143716 and Simons Foundation Award Number 281291}

\begin{abstract} This paper presents infinitely many new examples of  tri\-angle-free uniquely 3-edge colorable cubic graphs.   The only such graph previously known was given by Tutte in 1976.
\end{abstract}

\maketitle

\section{History}

Recall that a \emph{cubic} graph is 3-regular, that a \emph{proper 3-edge coloring} assigns colors to edges such that no two incident edges receive the same color, that \emph{edge-Kempe chains} are maximal sequences of edges that alternate between two colors, and that a \emph{Hamilton cycle} includes all vertices of a graph.

It is well known that a cubic graph with a Hamilton cycle is 3-edge colorable, as the Hamilton cycle is even (and thus 2-edge colorable) and its complement is a matching (that can be monochromatically colored).  A uniquely 3-edge colorable cubic graph must have exactly three Hamilton cycles, each an edge-Kempe chain in one of the ${3\choose 2}$ pairs of colors.  The converse is not true, as a cubic graph may have some colorings with Hamilton edge-Kempe chains and other colorings with non-Hamilton edge-Kempe chains; examples are given in \cite{Th82}.

The literature classifying uniquely 3-edge colorable cubic graphs is sparse; there is no complete characterization \cite{GY}.  It is well known that  the property of being uniquely 3-edge colorable is invariant under application of  $\Delta-Y$ transformations.  It was conjectured that  every simple planar cubic graph with exactly three Hamilton cycles contains a triangle \cite[Cantoni]{tutte}, and also that every simple planar uniquely 3-edge colorable cubic graph contains a triangle \cite{FW}.  The latter conjecture is proved in \cite{fow}, where it is also shown that if a simple planar cubic graph has exactly three Hamilton cycles, then it contains a triangle if and only if it is uniquely 3-edge colorable.  

Tutte, in a 1976 paper about the average number of Hamilton cycles in a graph \cite{tutte}, 
offhandedly remarks that one example of a nonplanar triangle-free uniquely 3-edge colorable cubic graph is the generalized Petersen graph $P(9,2)$, pictured in Figure \ref{fig:nine}.  He describes the graph as two 9-cycles $a_0\dots a_8$, $b_0\dots b_8$, with additional edges $a_ib_{2i}$ and index arithmetic done modulo 9.  \begin{figure}
\begin{center}
\includegraphics[scale=.6]{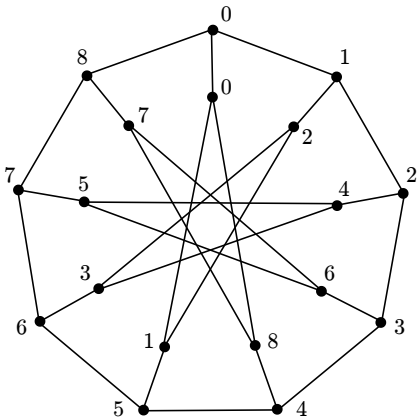} 
\caption{The generalized Petersen graph $P(9,2)$ labeled with Tutte's indices.}
\label{fig:nine}
\end{center}
\end{figure} 
(The generalized Petersen graph $P(m,2)$ is defined analogously, and in fact the known cubic graphs with exactly three Hamilton cycles and multiple distinct 3-edge colorings are $P(6k+3,2)$ for $k>1$ \cite{Th82}.)
It appears that the search for examples of triangle-free nonplanar uniquely 3-edge colorable cubic graphs ended with Tutte, or at least that any further efforts have been unsuccessful. Multiple sources (\cite{gcq2}, \cite{GY},  \cite{JT}) note that Tutte's example is the only known triangle-free nonplanar example.   It has been conjectured \cite{FW} that $P(9,2)$ is the only example. In Section \ref{sec:tri-free} we give infinitely many such graphs.

\section{New examples of triangle-free nonplanar uniquely 3-edge colorable cubic graphs}\label{sec:tri-free}

In \cite{bH} the authors introduced the following construction.  Consider two cubic graphs $G_1, G_2$, and form $G_1\thinspace\includegraphics[scale=.3]{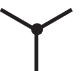}\thinspace G_2$ by choosing a vertex $v_i$ in $G_i\ (i=1,2)$, removing $v_i$ from $G_i\ (i=1,2)$, and adding a matching of three edges joining the three neighbors of $v_1$ with the three neighbors of $v_2.$
Of course there are many ways 
to choose $v_1, v_2$, and many ways to identify their incident edges, so the construction is not unique.  However, it is reversible; given a cubic graph $G$ with a 3-edge cut, we may decompose $G = G_1\thinspace\includegraphics[scale=.3]{Y.eps}
\thinspace G_2$. In that paper we proved the following.

\begin{theorem}\label{thm:old}[3.8 of \cite{bH}]
 Let $G_1, G_2$ be cubic graphs and $a_i$ the number of 3-edge colorings of  $G_i$.  Then $G_1\thinspace\includegraphics[scale=.3]{Y.eps} \thinspace G_2$  has  $a_1a_2$ edge colorings.\end{theorem}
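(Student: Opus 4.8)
The plan is to produce an explicit bijection between the $3$-edge colorings of $G:=G_1\thinspace\includegraphics[scale=.3]{Y.eps}\thinspace G_2$ and the ordered pairs $(\pi_1,\pi_2)$ in which $\pi_i$ is a $3$-edge coloring of $G_i$; counting both sides then yields $a_1a_2$. Throughout it is cleanest to view a $3$-edge coloring of a cubic graph as a partition of its edge set into three perfect matchings (equivalently, a proper $3$-edge coloring taken up to permuting the three colors), since at a cubic vertex the three incident edges necessarily receive three distinct colors. Fix the data of the construction: let $v_i\in G_i$ be the deleted vertices, with neighborhoods $\{a_1,a_2,a_3\}$ of $v_1$ and $\{b_1,b_2,b_3\}$ of $v_2$, indexed so that the matching added in forming $G$ is $C=\{a_1b_1,\,a_2b_2,\,a_3b_3\}$. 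Then $E(G)=E(G_1-v_1)\sqcup C\sqcup E(G_2-v_2)$, and $C$ is an edge cut of $G$ separating $A:=V(G_1)\setminus\{v_1\}$ from $B:=V(G_2)\setminus\{v_2\}$.

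The one genuinely necessary ingredient is a parity lemma: in every partition $\{M_1,M_2,M_3\}$ of $E(G)$ into perfect matchings, each $M_k$ contains exactly one edge of $C$. To see this, note that $|V(G_1)|$ is even because $G_1$ is cubic, so $|A|$ is odd; since $M_k$ saturates every vertex, $|A|=\sum_{a\in A}\deg_{M_k}(a)=2\,\bigl|\{e\in M_k:e\subseteq A\}\bigr|+|M_k\cap C|$, which forces $|M_k\cap C|$ to be odd, hence $1$ or $3$. As $|M_1\cap C|+|M_2\cap C|+|M_3\cap C|=|C|=3$, the only possibility is $1+1+1$.

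Now build the forward map. Given a coloring $\pi$ of $G$, each part $M\in\pi$ contains, by the lemma, a unique edge $a_{j}b_{j}\in C$; replace it by the $v_1$-edge $v_1a_{j}$, setting $M^{(1)}:=(M\cap E(G_1-v_1))\cup\{v_1a_{j}\}$ and, symmetrically, $M^{(2)}:=(M\cap E(G_2-v_2))\cup\{v_2b_{j}\}$. One checks directly that $\pi_1:=\{M^{(1)}:M\in\pi\}$ is a partition of $E(G_1)$ into perfect matchings: nothing changes at a vertex other than $v_1,a_1,a_2,a_3$; at each $a_j$ the lone $C$-edge of its part has been swapped for the lone $v_1$-edge of that part; and at $v_1$ the three edges $v_1a_1,v_1a_2,v_1a_3$ land in three different parts. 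Likewise $\pi_2:=\{M^{(2)}:M\in\pi\}$ colors $G_2$, and we send $\pi\mapsto(\pi_1,\pi_2)$. For the inverse, start from $(\pi_1,\pi_2)$: each part $L$ of $\pi_1$ contains a unique edge at $v_1$, say $v_1a_{j}$, and each part $K$ of $\pi_2$ a unique edge $v_2b_{j'}$ at $v_2$, so the index $j$ sets up a bijection between the parts of $\pi_1$ and the parts of $\pi_2$; for matched parts $L\leftrightarrow K$ (common index $j$) form $(L\cap E(G_1-v_1))\cup\{a_{j}b_{j}\}\cup(K\cap E(G_2-v_2))$, and check as above that the resulting three sets partition $E(G)$ into perfect matchings.

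That these two maps are mutually inverse is then immediate: in a coloring of $G$, a part is paired with the part on the other side carrying the same index on $C$, which is exactly the pairing used going backwards, and the swap ``$C$-edge $\leftrightarrow$ pendant edge at $v_i$'' is its own undoing. The parity lemma is the crux and the only obstacle of substance --- without it a perfect matching of $G$ could a priori occupy all of $C$, and the restriction step would fail to yield a perfect-matching partition of $G_1$. Everything else is incidence bookkeeping localized at the $v_i$ and their neighbors, and I expect the written proof to be short; note in passing that the argument makes no use of simplicity of the $G_i$ and correctly gives $0$ colorings for $G$ whenever some $G_i$ fails to be $3$-edge colorable.
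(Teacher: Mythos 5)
Your proof is correct: the parity lemma (each color class of a $3$-edge coloring meets a $3$-edge cut in exactly one edge, by the odd-order count on one side) plus the edge-swap bijection at $v_1$ and $v_2$ is exactly the standard argument, and your convention of counting colorings as partitions into perfect matchings matches the intended count. The paper itself does not reprove this statement --- it imports it as Theorem 3.8 of \cite{bH} --- and the proof given there proceeds along essentially these same lines, so there is nothing to flag.
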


Define $G^{\includegraphics[scale=.3]{Y.eps}}$ to be the infinite family of graphs consisting of all  graphs of the form $G \thinspace\includegraphics[scale=.3]{Y.eps}\thinspace G \thinspace\includegraphics[scale=.3]{Y.eps}\cdots  \includegraphics[scale=.3]{Y.eps}\thinspace G$.
This leads to the following corollaries of  Theorem \ref{thm:old}.


\begin{theorem}\label{thm:yup} If $G$ is a  uniquely 3-edge colorable graph, then all graphs in   $G^{\includegraphics[scale=.3]{Y.eps}}$ are uniquely 3-edge colorable.
 \end{theorem}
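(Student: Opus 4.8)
The plan is to prove this by induction on the number of copies of $G$ used to build a member of $G^{\includegraphics[scale=.3]{Y.eps}}$, using Theorem \ref{thm:old} as the engine. First I would record the observation that the $\includegraphics[scale=.3]{Y.eps}$ operation preserves cubicness: deleting a degree-$3$ vertex from each of two cubic graphs leaves two triples of degree-$2$ vertices, and the added matching of three edges restores every affected vertex to degree $3$. Hence every graph in $G^{\includegraphics[scale=.3]{Y.eps}}$ is cubic, and so is every intermediate graph arising in its construction, so that Theorem \ref{thm:old} applies at every stage. I would also note that ``uniquely $3$-edge colorable'' is precisely the statement that the number $a$ of $3$-edge colorings equals $1$.

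Next, fix $H \in G^{\includegraphics[scale=.3]{Y.eps}}$, say $H$ is built from $n$ copies of $G$, and induct on $n$. The base case $n=1$ is the hypothesis that $G$ is uniquely $3$-edge colorable. For $n \ge 2$, any such $H$ can be written as $H = H_1 \thinspace\includegraphics[scale=.3]{Y.eps}\thinspace H_2$, where $H_1$ and $H_2$ are themselves members of $G^{\includegraphics[scale=.3]{Y.eps}}$ built from fewer than $n$ copies of $G$ (reading the iterated construction left-associatively, one may simply take $H_2 = G$ and $H_1$ the graph on the first $n-1$ copies). By the inductive hypothesis, $H_1$ and $H_2$ each have exactly one $3$-edge coloring, so Theorem \ref{thm:old} gives that $H$ has $1 \cdot 1 = 1$ edge coloring; that is, $H$ is uniquely $3$-edge colorable.

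The argument is short because Theorem \ref{thm:old} does the real work; the two points needing care are (i) verifying that every member of $G^{\includegraphics[scale=.3]{Y.eps}}$ and every graph appearing in its assembly is cubic, so that Theorem \ref{thm:old} is applicable throughout, and (ii) noting that the conclusion of Theorem \ref{thm:old} depends on $G_1, G_2$ only through their coloring counts $a_1, a_2$, and not on the (non-unique) choices of vertices and identifications in the construction, so that however a member of $G^{\includegraphics[scale=.3]{Y.eps}}$ is assembled its coloring count is the product $1^n = 1$. I expect (i) to be the only genuine, if minor, obstacle, and it is dispatched by the degree count above.
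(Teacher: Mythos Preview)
Your proposal is correct and follows exactly the approach the paper takes: induction on the number of copies of $G$, invoking Theorem~\ref{thm:old} at each step to multiply coloring counts. The paper's own proof is a single sentence to this effect; your write-up simply supplies the routine details (preservation of cubicness, independence from the choice of vertices and matching) that the paper leaves implicit.
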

 
 \begin{proof}   The proof proceeds by induction on the number of copies of $G$.   

\end{proof}

\begin{corollary}\label{ex:yes}
All members of the infinite family  
$P(9,2)^{\includegraphics[scale=.3]{Y.eps}}$

are uniquely 3-edge colorable.

\end{corollary}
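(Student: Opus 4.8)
The plan is to read Corollary~\ref{ex:yes} off of Theorem~\ref{thm:yup}. That theorem needs as input only a single uniquely 3-edge colorable cubic graph $G$, so taking $G=P(9,2)$ reduces the whole statement to the one fact that $P(9,2)$ is itself uniquely 3-edge colorable. I would cite Tutte \cite{tutte} for this, but a self-contained check is short: a proper 3-edge coloring of a cubic graph is exactly a partition of its edge set into three perfect matchings, so, in the description of $P(9,2)$ as two $9$-cycles $a_0\dots a_8$ and $b_0\dots b_8$ with spokes $a_ib_{2i}$, one fixes the color of a single spoke and propagates. The odd lengths of the two rim cycles---each of which must meet all three color classes---together with the constraints forced at the remaining spokes then pin down every other edge, giving a unique partition up to relabeling the three colors; equivalently, one verifies that the three $2$-factors obtained by discarding one color class are precisely the three Hamilton cycles of $P(9,2)$ and that $P(9,2)$ admits no other $3$-edge coloring.

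Granting this, Theorem~\ref{thm:yup} applied with $G=P(9,2)$ gives at once that \emph{every} member of $P(9,2)^{\includegraphics[scale=.3]{Y.eps}}$---whatever choices of vertices and identifications are made along the chain---is uniquely 3-edge colorable, which is the assertion. In particular, the coloring count needs nothing new: it is handed to us by the multiplicativity of Theorem~\ref{thm:old} and the induction already performed in Theorem~\ref{thm:yup}.

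Since the point of Section~\ref{sec:tri-free} is to furnish \emph{new} triangle-free nonplanar examples, I would additionally record the routine facts that make the family infinite and genuinely new. A chain of $k$ copies of $P(9,2)$ has $18k-2(k-1)=16k+2$ vertices, so distinct $k$ give non-isomorphic graphs and the family is infinite. Each member carries a $3$-edge cut one side of which contracts to a single vertex, returning $P(9,2)$ as a minor, so every member is nonplanar because $P(9,2)$ is. And every member is triangle-free: the three edges crossing such a cut form a matching, so any neighbor of an endpoint of a crossing edge---other than the other endpoint---lies strictly on that endpoint's side of the cut, whence no triangle can use a crossing edge, while a triangle using no crossing edge would lie inside a single copy of $P(9,2)$. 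The main point, then, is that there is no hard step: the substance is entirely in Theorem~\ref{thm:yup} and Tutte's base case, and all that remains is this bookkeeping guaranteeing that the graphs produced are simple, triangle-free, nonplanar, and pairwise distinct rather than merely uniquely 3-edge colorable.
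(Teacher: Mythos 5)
Your proposal is correct and matches the paper's (implicit) argument exactly: the corollary is read off from Theorem~\ref{thm:yup} with $G=P(9,2)$, whose unique 3-edge colorability is Tutte's observation. The additional bookkeeping on vertex counts, triangle-freeness, and nonplanarity is accurate but goes beyond what the corollary itself asserts; the paper handles those properties separately in the surrounding discussion.
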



{\bf Note.}

 In  \cite{gcq1}, Goldwasser and Zhang proved that if a uniquely 3-edge colorable graph has an edge cut of size  3 or 4 such that each remaining component contains a cycle, then the graph can be decomposed into two smaller uniquely 3-edge colorable graphs.  It seems they did not observe the reverse construction.

\subsection{ Examples and Properties }
The smallest member of $P(9,2)^{\includegraphics[scale=.3]{Y.eps}}$  is of course $P(9,2)$, which  has $18$ vertices. 
 For  every  integer $k>1$ there are multiple graphs  in $P(9,2)^{\includegraphics[scale=.3]{Y.eps}}$ with $16k+2$ vertices. 
Nonisomorphic examples with $k=2$ are shown in Figures  \ref{fig:doublenine} and \ref{fig:doublecross}.

\begin{figure}
\begin{center}
\includegraphics[scale=.45]{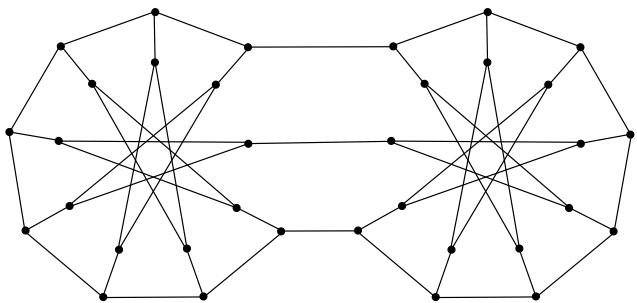} 
\caption{A nonplanar, triangle-free, uniquely 3-edge colorable graph with 34 vertices.}
\label{fig:doublenine}
\end{center}
\end{figure}

\begin{figure}
\begin{center}
\includegraphics[scale=.45]{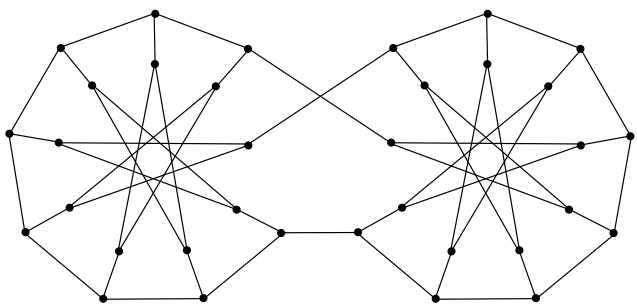} 
\caption{A nonplanar, triangle-free, uniquely 3-edge colorable graph with 34 vertices that is nonisomorphic to that shown in Figure \ref{fig:doublenine}.}
\label{fig:doublecross}
\end{center}
\end{figure}

The  graphs in  $P(9,2)^{\includegraphics[scale=.3]{Y.eps}}$ are clearly all nonplanar.   We show next that there are graphs in $P(9,2)^{\includegraphics[scale=.3]{Y.eps}}$ of every other orientable and nonorientable genus.

\begin{theorem}  Every graph in $P(9,2)^{\includegraphics[scale=.3]{Y.eps}}$  with $16k+2$ vertices has orientable and nonorientable genus at most $k$.
Further, there is a large subfamily of graphs in $P(9,2)^{\includegraphics[scale=.3]{Y.eps}}$, each of which has $16k+2$ vertices
and orientable and nonorientable genus exactly $k$.
\end{theorem}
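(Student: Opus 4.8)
The plan is to derive the upper bound from an explicit surface construction built out of a minimum embedding of $P(9,2)$, and the lower bound, for the advertised subfamily, from the additivity of the genus over disjoint unions. Write $\gamma$ for orientable genus and $\widetilde{\gamma}$ for non-orientable genus, and recall that a member of $P(9,2)^{\includegraphics[scale=.3]{Y.eps}}$ with $16k+2$ vertices is precisely one assembled from $k$ copies of $P(9,2)$ by $k-1$ applications of the $\includegraphics[scale=.3]{Y.eps}$ operation.

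For the upper bound, first construct directly a toroidal embedding $\psi$ and a projective-planar embedding $\psi'$ of $P(9,2)$ (both exist; on the torus $P(9,2)$ has $9$ faces of average size $6$, and comparably on $N_1$, so there is ample local slack). Given such a $G$, place a copy of $(P(9,2),\psi)$ on each of $k$ disjoint tori; for each instance of the operation, delete the chosen vertex from the two relevant copies together with a small open disc around it, and glue the two resulting boundary circles. Gluing along a circle interior to a face is a connected sum, so the $k$ tori joined by these $k-1$ circles form a surface of genus $k$; it remains to route, inside the annular neighbourhood of each gluing circle, the three matching edges joining the surviving neighbours of the two deleted vertices. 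In an annulus, three pairwise disjoint arcs joining three prescribed points of one boundary circle to three prescribed points of the other exist exactly when the two cyclic orders agree, and the order in which the three stubs meet a circle is the local rotation at the deleted vertex, which one may freely reverse. So the construction succeeds once $\psi$ can be chosen so that the rotation at the deleted vertex --- and, at each deleted vertex simultaneously when a copy is used in two or more operations --- is prescribable arbitrarily with the embedding staying toroidal; this is a finite check. The same construction with $\psi'$, using that a connected sum of $k$ projective planes is $N_k$, gives $\widetilde{\gamma}(G)\le k$.

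For the lower bound it suffices to exhibit a large subfamily on which $\gamma=\widetilde{\gamma}=k$: take the linearly assembled members in which every copy glued to two others has its two deleted vertices chosen so that $P(9,2)$ minus that pair is still non-planar. Such pairs abound, because $P(9,2)-v$ is non-planar for \emph{every} vertex $v$. Indeed, deleting $v$ breaks one of the two $9$-cycles into a path $P$ and leaves a spanning ``caterpillar'' with spine $P$ whose legs are the spokes at the vertices of $P$; this caterpillar is attached to the other, intact $9$-cycle, which in a plane drawing bounds a disc that the caterpillar (being connected) lies entirely inside or outside of. The cyclic order in which its legs meet that cycle is $(2,4,6,8,1,3,5,7)$ when $v$ lies on the $b$-cycle and $(5,1,6,2,7,3,8,4)$ when $v$ lies on the $a$-cycle, and a short check shows neither sequence --- even up to cyclic rotation --- is an increasing run followed by a decreasing run; but that is exactly the form forced on the leg order of a caterpillar drawn inside a disc, so no such drawing exists. (Equivalently: for each of the two vertex orbits exhibit a $K_{3,3}$-subdivision of $P(9,2)$ avoiding $v$; being small it also avoids a suitable further vertex.) Now, for $G$ in the subfamily, let $H_i$ be the $i$-th copy with its one or two deleted vertices removed. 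The $H_i$ are pairwise vertex-disjoint non-planar subgraphs of $G$ (delete the $3(k-1)$ matching edges), so by additivity of the orientable genus over disjoint unions (Battle, Harary, Kodama and Youngs), $\gamma(G)\ge\gamma\bigl(\textstyle\bigsqcup_i H_i\bigr)=\sum_i\gamma(H_i)\ge k$; and since the Euler genus is additive over disjoint unions (Stahl and Beineke) and never exceeds $\widetilde{\gamma}$, also $\widetilde{\gamma}(G)\ge k$. With the upper bound this gives $\gamma(G)=\widetilde{\gamma}(G)=k$, and the subfamily is large because the internal deleted pairs and the $3(k-1)$ matchings may be chosen in many (indeed exponentially many in $k$) ways.

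The genuinely delicate step is the routing in the upper bound: verifying that $P(9,2)$ admits minimum orientable and non-orientable embeddings flexible enough --- in the prescribed rotations at the deleted vertices --- to absorb an \emph{arbitrary} matching in each connecting annulus without an extra handle or crosscap. If this local flexibility ever failed, one would be forced to add a handle per offending gluing and the bound would degrade to roughly $2k-1$; for the subfamily in the second assertion one additionally controls the matchings, so the check there is immediate, but the ``every graph'' clause of the first assertion requires the full verification.
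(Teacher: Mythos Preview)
Your overall strategy is the paper's: build the upper bound by connected-summing minimum embeddings of the copies and routing the three matching edges across each join, and build the lower bound by exhibiting pairwise disjoint non-planar subgraphs (one per copy) and invoking additivity of genus over components.

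The paper organizes the upper bound inductively through an auxiliary graph $\widehat{Q_k}$ obtained by \emph{identifying} (rather than deleting) a vertex of a fresh copy of $P(9,2)$ with a vertex of $Q_{k-1}$; block additivity (Battle--Harary--Kodama--Youngs) then gives genus at most $k$, after which the cut vertex is split into the desired $3$-edge cut. This inductive framing dissolves your ``delicate step'': at each stage only one new copy is being attached, and since a degree-$3$ vertex has only two cyclic rotations --- interchanged by reflecting the embedding of that single new copy --- any prescribed matching routes in the annulus. No simultaneous constraints across several deleted vertices of the same copy ever arise, so the ``every graph'' clause goes through without the laborious flexibility check you flag at the end. (The paper's assertion that splitting the cut vertex does not raise the genus is, of course, exactly the same routing fact; it just declines to dwell on it.)

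For the lower bound the paper takes the most direct route: it exhibits one fixed $K_{3,3}$-subdivision in $P(9,2)$ missing four named vertices $\{t_1,t_2,t_3,t_4\}$, and the subfamily consists of those $Q_k$ in which every deleted vertex is drawn from this set, so that $k$ disjoint $K_{3,3}$-subdivisions survive. Your caterpillar argument is a pleasant alternative proof that $P(9,2)-v$ is non-planar for every $v$, but it only handles single deletions; for the interior copies losing two vertices you rightly revert (in the parenthetical) to the subdivision picture, which is essentially what the paper does from the start.
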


 \begin{figure}
\begin{center}
\includegraphics[scale=.35]{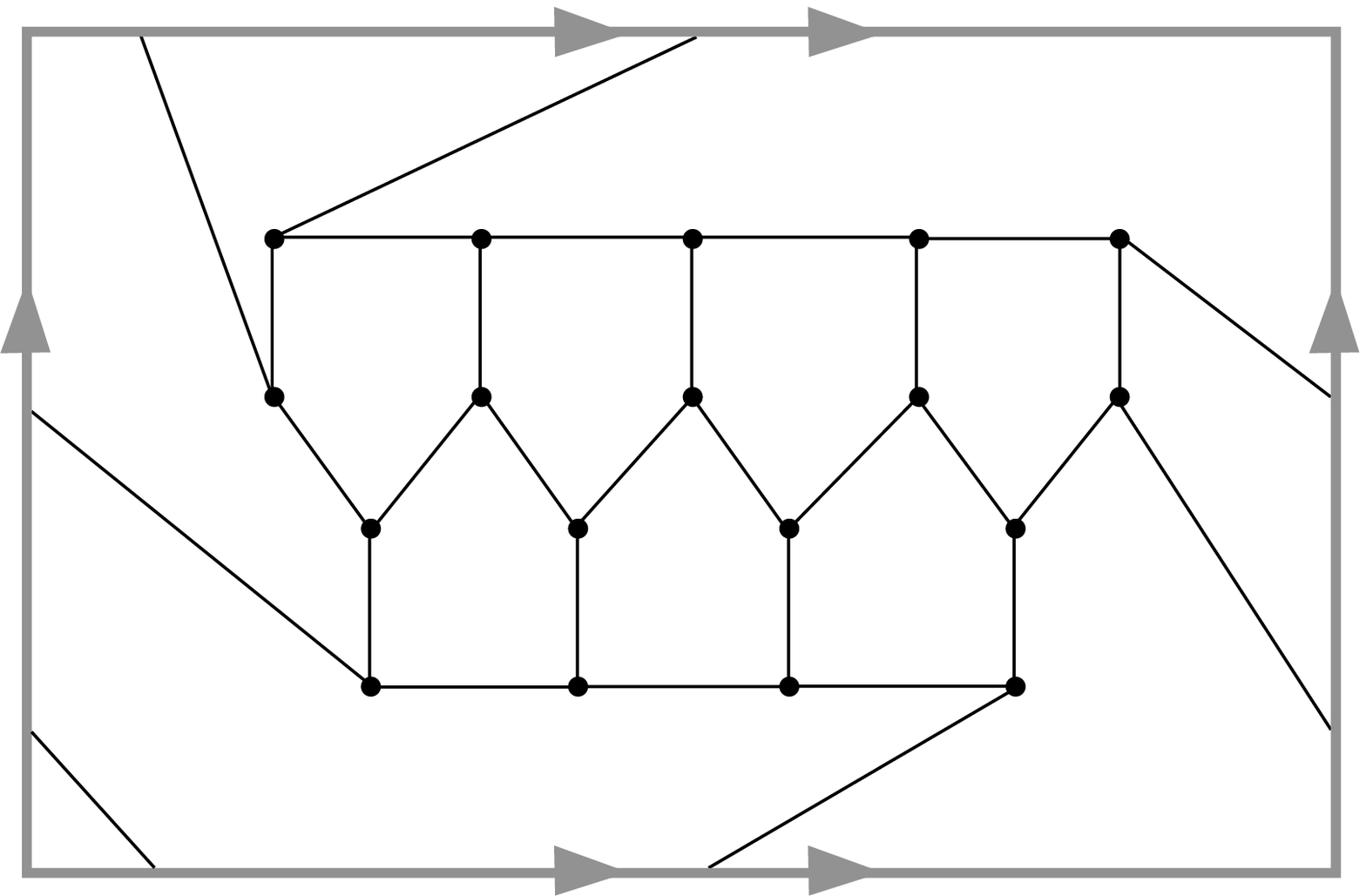} \hspace{.5cm} \raisebox{-0.15\height}{  \includegraphics[scale=.4]{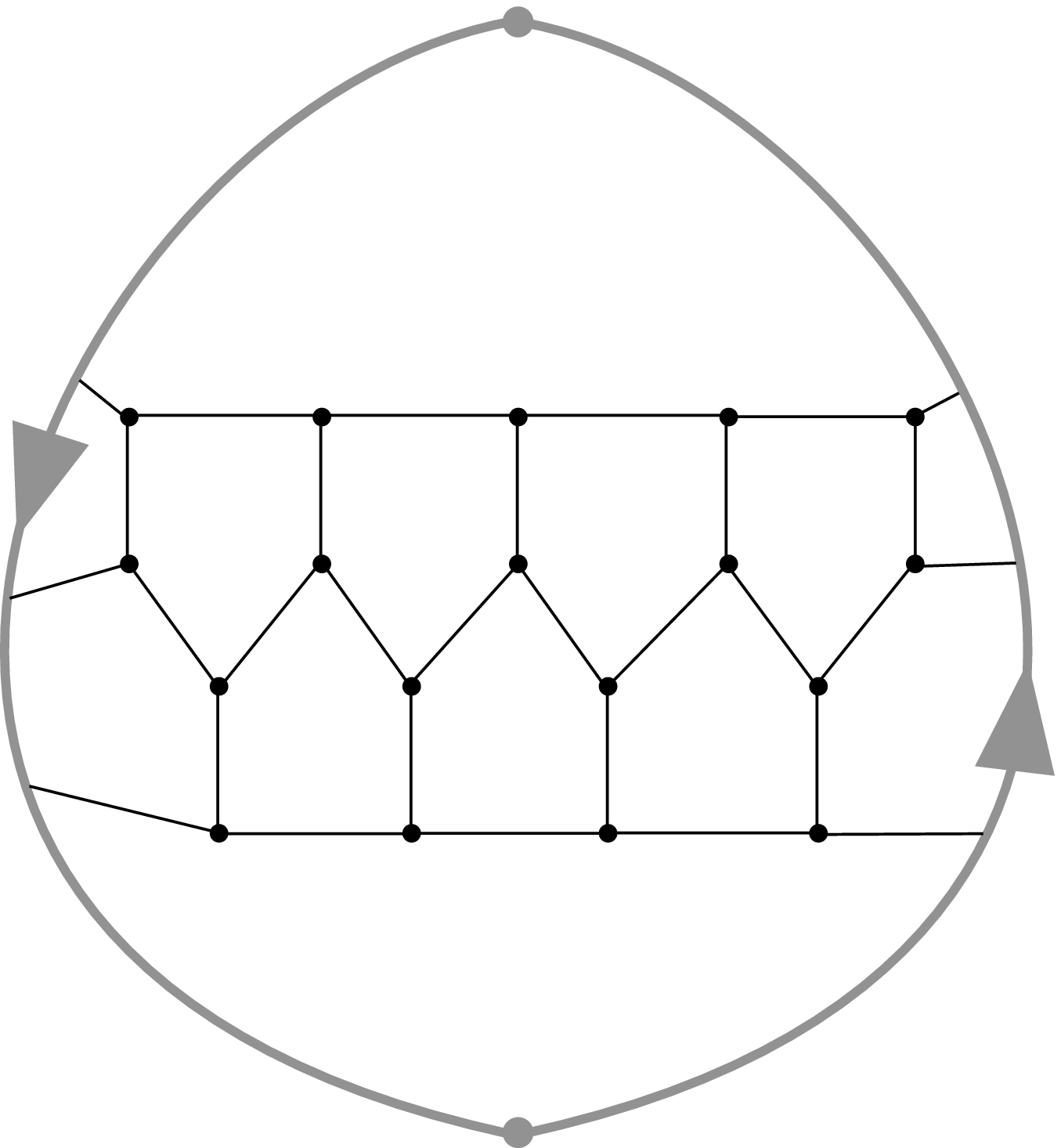} }
\caption{Embeddings of $P(9,2)$ on the torus (left) and  projective plane (right) } 
\label{fig:tor-pp}
\end{center}
\end{figure}

\begin{proof}
 We will show by induction that any graph $Q_k$ created using the \includegraphics[scale=.3]{Y.eps}-construction with $k$ copies of $P(9,2)$ has orientable and nonorientable genus at most $k$.  The base case holds because $P(9,2)$ embeds on both the torus (see Figure \ref{fig:tor-pp} (left)) and on the projective plane (see Figure \ref{fig:tor-pp} (right)).

Now consider $Q_{k}$, a graph created using the \includegraphics[scale=.3]{Y.eps}-construction with $k$ copies of $P(9,2)$.
 $Q_k$ was obtained  by removing and associating $v\in P(9,2)$ and some
 $ w\in Q_{k-1}$ via the \includegraphics[scale=.3]{Y.eps}-construction, where $Q_{k-1}$ is some graph created using  $k-1$ copies of $P(9,2)$ that has genus $k-1$  or less by inductive hypothesis.
 Let $\widehat{Q_k}$ be the 
 graph produced by simply identifying the vertices $v$ and $w$. The graph $\widehat{Q_k}$ has two
 blocks that meet at this vertex and by  Theorem 1 of  \cite{bhky}, the genus of  $\widehat{Q_k}$  is the 
 sum of the genera of the blocks (so it is $k$).  
Replacing the cut vertex by a 3-edge cut to implement the \includegraphics[scale=.3]{Y.eps}
construction does not increase the genus, which completes the proof of the upper bound on genus.

\begin{figure}[htp]
\begin{center}
\includegraphics[scale=.4]{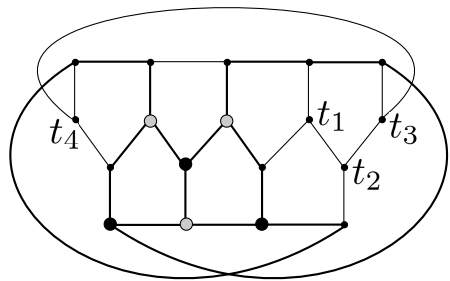} 
\caption{$P(9,2)$ with a copy of a subdivision of $K_{3,3}$ highlighted.}
\label{fig:pent}
\end{center}
\end{figure}

A copy of a subdivision of $K_{3,3}$  is highlighted in the embedding of $P(9,2)$ shown in Figure \ref{fig:pent}. 
There are  four vertices $\{ t_1, t_2, t_3, t_4\}$ whose edges are not involved in the subdivided $K_{3,3}$.  
Any (or all) of $\{ t_1, t_2, t_3, t_4\}$ can be removed 
and the resulting graph will still have a $K_{3,3}$ minor.   If $Q_k$ is formed such that in each copy of 
$P(9,2)$ only (some) of vertices $\{ t_1, t_2, t_3, t_4\}$ are used in the 
 \includegraphics[scale=.3]{Y.eps} construction then there will still be $k$ disjoint copies of subdivisions 
 of  $K_{3,3}$ in $Q_k$. The genus of a graph is the sum of the genera of its components 
 \cite[Cor. 2]{bhky}, so using this construction $Q_k$  has a minor with orientable
  (resp. nonorientable) genus  exactly $k$. It is straightforward to draw an embedding of sample $Q_k$ on a surface of orientable
 or nonorientable genus $k$.


 \end{proof}

\section{Conclusion}

While we have provided infinitely many examples of triangle-free nonplanar uniquely 3-edge colorable cubic graphs, it is still unknown whether other examples exist. 
All our examples support  Zhang's conjecture \cite{cq} that every triangle-free uniquely 3-edge colorable cubic graph contains a Petersen graph minor.   That conjecture remains open.

\nocite{*}
\bibliographystyle{amsplain}
\bibliography{}

\begin{thebibliography}{50}

\bibitem{bhky} J. Battle, F. Harary, Y. Kodama, and J.W.T. Youngs, \emph{Additivity of the Genus of a Graph}, Bull. Amer. Math. Soc. {\bf 68} (1962), 565--568.

\bibitem{bH} s.-m. belcastro and R. Haas, \emph{Counting edge-Kempe-equivalence classes for 3-edge-colored cubic graphs}, with R. Haas, Discrete Mathematics, {\bf 325} (2014), 77--84.

\bibitem{FW} S. Fiorini and R.J. Wilson, \emph{Edge colourings of graphs}, in \emph{Selected topics in
graph theory}, ed. L. Beineke and R. Wilson, Academic Press, 1978, 103--126.

\bibitem{fow} T.G. Fowler, \emph{Unique Coloring of Planar Graphs}, dissertation, Georgia Institute of Technology, November, 1998;  available at \url{people.math.gatech.edu/~thomas/FC/fowlerphd.pdf}.

\bibitem{gcq1}  J.L. Goldwasser and C.-Q. Zhang, \emph{On minimal counterexamples to a conjecture about unique edge-3-coloring}, 
Festschrift for C. St. J. A. Nash-Williams, 
Congr. Numer. {\bf 113} (1996), 143--152. 

\bibitem{gcq2}  J.L. Goldwasser and C.-Q. Zhang, \emph{Uniquely Edge-3-Colorable Graphs and Snarks}, Graphs and Combinatorics {\bf 16} (2000), 257--267.

\bibitem{GY} J. Gross and J. Yellen, \emph{Handbook of Graph Theory}, CRC Press, Boca Raton, 2004.

\bibitem{isaacs} R. Isaacs, \emph{Infinite families of nontrivial trivalent graphs which are not Tait colorable}, Amer. Math. Monthly {\bf 82} (1975), 221--239.

\bibitem{JT} T. Jensen and B. Toft, \emph{Graph Coloring Problems}, Wiley Interscience Series, New York, 1995.

\bibitem{MT} B. Mohar and C. Thomassen, \emph{Graphs on Surfaces}, Johns Hopkins University Press, Baltimore and London, 2001.

\bibitem{rsst} N. Robertson, D. Sanders, P. D. Seymour and R. Thomas, \emph{The four colour theorem}, J. Combin. Theory Ser. B. {\bf 70} (1997), 2--44.

\bibitem{Th82} A. Thomason, \emph{Cubic graphs with three hamiltonian cycles are not always uniquely
edge colorable}, J. Graph Theory {\bf 6} (1982), 219--221. 

\bibitem{tutte} W.T. Tutte, \emph{Hamiltonian circuits}, Colloquio Internazional sulle Teorie Combinatorics,
Atti dei Convegni Lincei. {\bf 17} (1976), Accad. Naz. Lincei, Roma I, 193--199. 

\bibitem{cq} C.-Q. Zhang,  \emph{Hamiltonian weights and unique edge-3-colorings of cubic graphs}, J. of
Graph Theory {\bf 20} (1995), 91--99. 

\end{thebibliography}

\end{document}